\newtheorem{thm}{Theorem}
\newtheorem{cor}{Corollary}
\newtheorem{ex}{Example}
\newcommand{\arctg}{\operatorname{arctg}}
\newcommand{\real}{\operatorname{Re}}
\newcommand{\imag}{\operatorname{Im}}
\newcommand{\A}{{\mathcal A}}
\newcommand{\D}{{\mathbb D}}
\begin{document}
\bibliographystyle{amsplain}

\title[New criteria for starlikeness in the unit disc]{New criteria for starlikeness in the unit disc}

\author[M. Obradovi\'{c}]{Milutin Obradovi\'{c}}
\address{Department of Mathematics,
Faculty of Civil Engineering, University of Belgrade,
Bulevar Kralja Aleksandra 73, 11000, Belgrade, Serbia}
\email{obrad@grf.bg.ac.rs}

\author[N. Tuneski]{Nikola Tuneski}
\address{Department of Mathematics and Informatics, Faculty of Mechanical Engineering, Ss. Cyril and
Methodius
University in Skopje, Karpo\v{s} II b.b., 1000 Skopje, Republic of North Macedonia.}
\email{nikola.tuneski@mf.edu.mk}

\subjclass[2010]{30C45, 30C50}
\keywords{starlike  functions, convex functions, criteria}

\begin{abstract}
It is well-known that the condition $\real \left[1+\frac{zf''(z)}{f'(z)}\right]>0$, $z\in\D$, implies that $f$ is starlike function (i.e. convexity implies starlikeness). If the previous condition is not satisfied for every $z\in \D$, then it is possible to get new criteria for starlikeness by using
$\left|\arg\left[\alpha+\frac{zf''(z)}{f'(z)}\right]\right|$, $z\in\D$, where $\alpha>1.$
\end{abstract}

\maketitle

\medskip

\section{Introduction and definitions}

\medskip

Let $\mathcal{A}$ be the class of functions $f$ which are analytic  in the open unit disc $\D=\{z:|z|<1\}$
of the form $f(z)=z+a_2z^2+a_3z^3+\cdots$,
and let $\mathcal{S}$ be the subclass of $\mathcal{A}$ consisting of functions that are univalent in $\D$.

\medskip

Also, let
\[
\begin{split}
\mathcal{C}&=\left[f\in\mathcal{A}: \real \left[1+\frac{zf''(z)}{f'(z)}\right]>0,\,(z\in\D)
\right],\\
\mathcal{S}^{\star}&=\left[f\in\mathcal{A}: \real \left[\frac{zf'(z)}{f(z)}\right]>0,\,(z\in\D) \right],
\end{split}
\]
denote the classes of convex  and starlike functions, respectively. It is well-known that
$$ f\in \mathcal{C} \quad \Rightarrow \quad f\in \mathcal{S}^{\star},$$
(see Duren \cite{duren}).
If the condition for convexity, $\real \left[1+\frac{zf''(z)}{f'(z)}\right]>0,$ is not satisfied for every $z\in \D$, then we cannot conclude that $f\in \mathcal{C}$, and consequently, $f\in \mathcal{S}^{\star}$.

\medskip

In this paper $f\in \mathcal{S}^{\star}$ in the terms of the expression
$$\left|\arg\left[\alpha+\frac{zf''(z)}{f'(z)}\right]\right|,\quad (z\in\D)$$
where $\alpha>1$.

\section{Maine results}
For our consideration we need the next lemma given by Nunokawa in \cite{Nuno92}.

\medskip

\noindent
{\bf Lemma A.} {\it Let $p(z)=1+p_{1}z+p_{2}z^{2}+\ldots $ be analytic in the unit disc $\D$ and $p(z)\neq0$ for $z\in\D$. Also, let suppose that there exists a point $z_{0}\in\D$ such that
 $\real \,p(z)>0$ for $|z|<|z_{0}|$ and $\real \,p(z_{0})=0$, where $p(z_{0})\neq0$, i.e.
$p(z_{0})=ia$, $a$ is real and $a\neq0$. Then we have $$\frac{z_{0}p'(z_{0})}{p(z_{0})}=ik,$$
where $k\geq\frac{1}{2}\left(a+\frac{1}{a}\right)$, when $a>0$, and
$k \leq-\frac{1}{2}\left(|a|+\frac{1}{|a|}\right)$, when $a<0$.}

\medskip

\begin{thm}\label{24-th 1}
Let $f\in\mathcal{A}$, $\alpha>1$, and let
\begin{equation}\label{eq2}
\left|\arg\left[\alpha+\frac{zf''(z)}{f'(z)}\right]\right|< {\arctg}\frac{\sqrt{3}}{\alpha-1}\quad (z\in\D).
\end{equation}
Then $f\in\mathcal{S}^{\star}$.
\end{thm}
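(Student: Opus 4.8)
The plan is to apply Nunokawa's Lemma A to the function $p(z)=\dfrac{zf'(z)}{f(z)}$, which is analytic in $\D$ with $p(0)=1$ and (once its regularity is secured) zero-free, and whose positivity of real part is exactly the condition $f\in\mathcal{S}^{\star}$. The first task is to translate the hypothesis into a statement about $p$. Taking the logarithmic derivative of $p(z)=zf'(z)/f(z)$ and multiplying through by $z$ produces the identities
\[
\frac{zp'(z)}{p(z)}=1+\frac{zf''(z)}{f'(z)}-p(z),
\qquad
\alpha+\frac{zf''(z)}{f'(z)}=(\alpha-1)+p(z)+\frac{zp'(z)}{p(z)}.
\]
The second identity is what lets us read off the argument that the hypothesis \eqref{eq2} controls.

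Next I would argue by contradiction. Since $p(0)=1$ has positive real part, if $\real\,p$ failed to be positive throughout $\D$ there would, by continuity, be a point $z_0$ with $\real\,p(z)>0$ for $|z|<|z_0|$ and $\real\,p(z_0)=0$, i.e.\ $p(z_0)=ia$ with $a$ real and $a\neq0$. Lemma A then yields $\dfrac{z_0p'(z_0)}{p(z_0)}=ik$ with $k\geq\frac12\!\left(a+\frac1a\right)$ when $a>0$ and $k\leq-\frac12\!\left(|a|+\frac1{|a|}\right)$ when $a<0$. Substituting into the identity above gives
\[
\alpha+\frac{z_0f''(z_0)}{f'(z_0)}=(\alpha-1)+i(a+k),
\]
a complex number with positive real part $\alpha-1$ and imaginary part $a+k$.

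The crux is then a sharp lower bound on $|a+k|$. For $a>0$ one has $a+k\geq \frac32a+\frac1{2a}$, and minimizing the right-hand side over $a>0$ (the minimum occurring at $a=1/\sqrt3$) gives $a+k\geq\sqrt3$; the case $a<0$ is symmetric and yields $a+k\leq-\sqrt3$. This elementary AM--GM estimate is the heart of the argument and is precisely what manufactures the constant $\sqrt3$ in the statement. It follows that
\[
\left|\arg\left[\alpha+\frac{z_0f''(z_0)}{f'(z_0)}\right]\right|
=\arctg\frac{|a+k|}{\alpha-1}
\geq\arctg\frac{\sqrt3}{\alpha-1},
\]
which contradicts \eqref{eq2}. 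Hence $\real\,p(z)>0$ for every $z\in\D$, that is, $f\in\mathcal{S}^{\star}$.

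The step I expect to be the genuine obstacle is the preliminary one: verifying that $p(z)=zf'(z)/f(z)$ is analytic and nonvanishing in $\D$ (equivalently, that $f'\neq0$ and that $f$ has no zeros in $\D$ apart from the simple one at the origin), since Lemma A presupposes this. Here I would exploit that \eqref{eq2} forces $\bigl|\arg[\alpha+zf''/f']\bigr|<\pi/2$, hence $\real\bigl[\alpha+zf''(z)/f'(z)\bigr]>0$, to rule out the degeneracies and guarantee that the whole contradiction argument is legitimately set up.
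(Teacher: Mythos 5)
Your proposal is correct and is essentially the paper's own proof: the same substitution $p(z)=zf'(z)/f(z)$, the same contradiction setup via Nunokawa's Lemma A at the first point $z_0$ where $\real p$ vanishes, and the same AM--GM minimization (the paper writes it as $\varphi(a)=3a+\tfrac1a\ge 2\sqrt3$ at $a=1/\sqrt3$) producing the constant $\sqrt3$. The preliminary step you flag is handled in the paper by exactly the device you sketch --- near a zero of $p$ (i.e.\ of $f'$) one has $\real\left[\frac{zp'(z)}{p(z)}\right]\to-\infty$, contradicting the positivity of $\real\left[\alpha+\frac{zf''(z)}{f'(z)}\right]$ forced by \eqref{eq2} --- though be aware that this positivity alone does not exclude zeros of $f$ itself (poles of $p$), a point your sketch and the paper both pass over silently.
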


\begin{proof}
If we put that $p(z)=\frac{zf'(z)}{f(z)}$, then $p(0)=1$ and
$\frac{zp'(z)}{p(z)}+p(z)-1 = \frac{zf''(z)}{f'(z)}$. So, we would need to prove that the following implication holds:
\begin{equation}\label{eq3}
\begin{split}
 \left|\arg\left[\frac{zp'(z)}{p(z)}+p(z)+\alpha-1 \right]\right|&<\arctg\frac{\sqrt{3}}{\alpha-1}\quad  (z\in\D)\\[2mm]
\Rightarrow \quad  \real p(z)&>0 \quad (z\in\D).
\end{split}
\end{equation}

First we will prove that $p(z)\neq0, z\in\D$. On a contrary, if there exists $z_{1}\in\D$ such that $z_{1}$ is the zero of
order $m$ of the function $p$, then
$p(z)=(z-z_{1})^{m}p_{1}(z)$, where $m$ is positive integer, $p_{1}$ is analytic in $\D$ with
$p_{1}(z_{1})\neq0$, and further
$$\frac{zp'(z)}{p(z)}=\frac{mz}{z-z_{1}}+\frac{zp_{1}'(z)}{p_{1}(z)}.$$
This means that the real part of the right hand side can tend to $-\infty$ when $z\rightarrow z_{1}$, which is a contradiction to the assumption of the theorem regarding the argument. Thus $p(z)\neq0$, $z\in\D.$

Now, let suppose that the implication \eqref{eq3} does not hold in the unit disc. It means that there exists a point
$z_{0}\in\D$ such that $\real p(z)>0$ for $|z|<|z_{0}|$ and $\real p(z_{0})=0$, where $p(z_{0})\neq0$.
If we put $p(z_{0})=ia$, $a$ is real and $a\neq0$, then by Lemma A
we have $$\frac{z_{0}p'(z_{0})}{p(z_{0})}=ik,$$
where $k\geq\frac{1}{2}\left(a+\frac{1}{a}\right)$, when $a>0$, and
$k \leq-\frac{1}{2}\left(|a|+\frac{1}{|a|}\right)$, when $a<0$.

Next, if we put $\Phi(z)=\frac{zp'(z)}{p(z)}+p(z)+\alpha-1$, then by the previous facts:
$$\real\Phi(z_{0})= \alpha-1, \quad \imag \Phi(z_{0})=k+a,$$
which for $a>0$ implies:
\[
\begin{split}
\arg\Phi(z_{0})&=\arctg\frac{k+a}{\alpha-1}\geq \arctg\frac{\frac{1}{2}\left(a+\frac{1}{a}\right)+a}{\alpha-1}\\
&=\arctg\frac{3a+\frac{1}{a}}{2(\alpha-1)}\geq \arctg\frac{\sqrt{3}}{\alpha-1},
\end{split}
\]
because the function $\varphi(a)=3a+\frac{1}{a}$ has its minimum value $\varphi(\frac{1}{\sqrt{3}})=2\sqrt{3}.$
Similarly, for $a<0$:
\[
\begin{split}
\arg\Phi(z_{0})&=\arctg\frac{k+a}{\alpha-1}\leq \arctg\frac{-\frac{1}{2}\left(|a|+\frac{1}{|a|}\right)-|a|}{\alpha-1}\\
&=-\arctg\frac{3|a|+\frac{1}{|a|}}{2(\alpha-1)}\leq -\arctg\frac{\sqrt{3}}{\alpha-1}.
\end{split}
\]
Combining the cases $a>0$ and $a<0$, we receive
$$\left|\arg \Phi(z_{0}) \right|\geq \arctg\frac{\sqrt{3}}{\alpha-1},$$
which is a contradiction to the relation \eqref{eq2}.

This show that $\real p(z)=\real\frac{zf'(z)}{f(z)}>0,$ for all $z\in\D$, i.e.
that  $f\in\mathcal{S}^{\star}$.
\end{proof}

\medskip

\begin{ex}
Let  $f\in \mathcal{A}$ is defined by the condition
\begin{equation}\label{eq4}
1+\frac{zf''(z)}{f'(z)}=(\sqrt{3}+1)\left(\frac{1+z}{1-z}\right)^{\frac{1}{2}}-\sqrt{3},
\end{equation}
where we use the principal value of the square root. For real $z$ close to -1 we have
$\real \left(1+\frac{zf''(z)}{f'(z)}\right)<0,$ which means that $f$ is not convex. On the other hand,
from \eqref{eq4} we have
$$\frac{zf''(z)}{f'(z)}+(\sqrt{3}+1)=(\sqrt{3}+1)\left(\frac{1+z}{1-z}\right)^{\frac{1}{2}}$$
and from here
\[
\begin{split}
\left|\arg \left[\frac{zf''(z)}{f'(z)}+(\sqrt{3}+1)\right]\right|
&\leq\frac{1}{2}\left|\arg\frac{1+z}{1-z}\right|\\
&<\frac{\pi}{4}=\arctg\frac{\sqrt{3}}{(\sqrt{3}+1)-1},
\end{split}
\]
which by Theorem \ref{24-th 1} (with $\alpha= \sqrt{3}+1$)  implies that $f\in\mathcal{S}^{\star}$.
\end{ex}

\medskip

Letting $\alpha$ tend to 1 in Theorem \ref{24-th 1}, we have the following well-known result.

\begin{cor}
Let $f\in\A$ and
$$\left|\arg\left[\frac{zf''(z)}{f'(z)}+1 \right]\right|<\frac{\pi}{2}\quad (z\in\D)$$
Then $f\in\mathcal{S}^{\star}.$
\end{cor}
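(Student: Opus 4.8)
The plan is to recognize that the hypothesis is simply the convexity condition expressed through the argument, and then to conclude either by the classical inclusion $\mathcal{C}\subset\mathcal{S}^{\star}$ or by repeating the argument of Theorem~\ref{24-th 1} at the endpoint $\alpha=1$. First I would record the elementary equivalence that, for a complex number $w\neq0$, one has $|\arg w|<\frac{\pi}{2}$ if and only if $\real w>0$. Applying this to $w=1+\frac{zf''(z)}{f'(z)}$, which equals $1$ at $z=0$ and hence is not identically zero, the hypothesis $\left|\arg\left[\frac{zf''(z)}{f'(z)}+1\right]\right|<\frac{\pi}{2}$ becomes exactly $\real\left[1+\frac{zf''(z)}{f'(z)}\right]>0$, that is, $f\in\mathcal{C}$. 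Since $\mathcal{C}\subset\mathcal{S}^{\star}$ (Duren \cite{duren}), the conclusion $f\in\mathcal{S}^{\star}$ follows at once.

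To keep the deduction self-contained and in the spirit of the theorem, I would alternatively re-run the proof of Theorem~\ref{24-th 1} with $\alpha=1$. Putting $p(z)=\frac{zf'(z)}{f(z)}$, so that $\frac{zp'(z)}{p(z)}+p(z)=1+\frac{zf''(z)}{f'(z)}$, the same reasoning gives $p(z)\neq0$ and, at a first point $z_0\in\D$ with $\real p(z_0)=0$ and $p(z_0)=ia$ ($a\neq0$), Lemma~A yields $\frac{z_0p'(z_0)}{p(z_0)}=ik$. For $\alpha=1$ the auxiliary function reduces to $\Phi(z_0)=ik+ia=i(k+a)$, which is purely imaginary; moreover $k+a\neq0$, since $k\geq\frac12\left(a+\frac1a\right)>0$ when $a>0$ and $k\leq-\frac12\left(|a|+\frac1{|a|}\right)<0$ when $a<0$, so $k$ and $a$ share the same sign. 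Hence $|\arg\Phi(z_0)|=\frac{\pi}{2}$, which contradicts the strict inequality in the hypothesis and forces $\real p(z)>0$ throughout $\D$.

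The point I expect to require the most care is the meaning of ``letting $\alpha$ tend to $1$.'' One cannot literally pass to the limit in Theorem~\ref{24-th 1} at a fixed $\alpha>1$: for a general convex $f$ the quantity $\imag\left(1+\frac{zf''(z)}{f'(z)}\right)$ may fail to stay below $\sqrt3$ as $\real\left(1+\frac{zf''(z)}{f'(z)}\right)\to0$, so no single $\alpha>1$ need satisfy $\left|\arg\left[\alpha+\frac{zf''(z)}{f'(z)}\right]\right|<\arctg\frac{\sqrt3}{\alpha-1}$ on all of $\D$. The limit is instead to be understood at the level of the two criteria: as $\alpha\to1^{+}$ the threshold $\arctg\frac{\sqrt3}{\alpha-1}$ increases to $\frac{\pi}{2}$ and $\alpha+\frac{zf''(z)}{f'(z)}$ tends to $1+\frac{zf''(z)}{f'(z)}$, so the limiting statement is exactly the convexity criterion recovered above. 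For this reason I would present the corollary as the endpoint case, proved rigorously by either of the two routes rather than by an actual limiting passage.
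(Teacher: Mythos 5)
Your proposal is correct, and it supplies rigor at exactly the point where the paper is silent: the paper gives no proof of this corollary at all, only the one-line remark that it follows by ``letting $\alpha$ tend to $1$'' in Theorem~\ref{24-th 1}. Your first route is surely the intended content --- since $|\arg w|<\frac{\pi}{2}$ is equivalent to $\real w>0$ for $w\neq0$, the hypothesis says precisely that $f\in\mathcal{C}$, and the corollary reduces to the classical inclusion $\mathcal{C}\subset\mathcal{S}^{\star}$, which the paper itself calls ``well-known.'' Your second route, rerunning the Nunokawa argument at the endpoint $\alpha=1$, is also sound: there $\Phi(z_0)=i(k+a)$ with $k$ and $a$ of the same sign by Lemma~A, so $\Phi(z_0)$ is a nonzero purely imaginary number, $|\arg\Phi(z_0)|=\frac{\pi}{2}$ contradicts the strict hypothesis, and this matches the fact that $\arctg\frac{\sqrt{3}}{\alpha-1}\to\frac{\pi}{2}$ as $\alpha\to1^{+}$. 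Your caveat about the limit is substantively right and worth recording: the corollary is an endpoint statement, not a special case of Theorem~\ref{24-th 1} for any fixed $\alpha$. Indeed, take $f(z)=\frac{z}{1-z}$, so that $1+\frac{zf''(z)}{f'(z)}=\frac{1+z}{1-z}$ maps $\D$ onto the full right half-plane; $f$ satisfies the corollary's hypothesis, yet for every fixed $\alpha>1$ the quantity $\alpha+\frac{zf''(z)}{f'(z)}$ takes values $\alpha-1+\varepsilon+iM$ with $M$ arbitrarily large, whose argument exceeds $\arctg\frac{\sqrt{3}}{\alpha-1}$, so the hypothesis of Theorem~\ref{24-th 1} fails on $\D$ for every such $\alpha$. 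In short: where the paper offers only an informal limiting remark, both of your arguments are complete proofs, and the first is the shortest honest justification.
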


\medskip

Further, it is easy to verify that the disc with center $\alpha$ and radius $\frac{\alpha\sqrt{3}}{\sqrt{3+(\alpha-1)^{2}}}$ is contained in the angle
$|\arg z|< \arctg\frac{\sqrt{3}}{\alpha-1}$, and so by using the result of  Theorem \ref{24-th 1}, we have that
$$\left|\frac{zf''(z)}{f'(z)}\right|=\left|\left(\frac{zf''(z)}{f'(z)}+\alpha \right)-\alpha\right|<
\frac{\alpha\sqrt{3}}{\sqrt{3+(\alpha-1)^{2}}}\quad (z\in\D) $$
implies $f\in\mathcal{S}^{\star}$.
Since the function $\varphi(\alpha)=:\frac{\alpha\sqrt{3}}{\sqrt{3+(\alpha-1)^{2}}}$, $\alpha\ge1$, attains its maximal value 2 for
$\alpha=4$, we get

\medskip

\begin{cor}
Let $f\in\A$ and
$$\left|\frac{zf''(z)}{f'(z)}\right|<2\quad (z\in\D)$$
Then $f\in\mathcal{S}^{\star}.$
\end{cor}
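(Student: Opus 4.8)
The plan is to deduce this corollary from Theorem \ref{24-th 1} by a purely geometric argument in which the parameter $\alpha$ is chosen optimally. The first step is to observe that the hypothesis $|zf''(z)/f'(z)|<2$ simply says that the point $w(z):=\alpha+zf''(z)/f'(z)$ lies in the open disc centred at the real point $\alpha$ of radius $2$. Hence, if I can arrange that this disc sits inside the sector $|\arg w|<\arctg\frac{\sqrt3}{\alpha-1}$, then the argument hypothesis \eqref{eq2} of Theorem \ref{24-th 1} holds automatically and the theorem yields $f\in\mathcal{S}^{\star}$.

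So the main geometric step is to find, for fixed $\alpha>1$, the largest disc centred at $\alpha$ that fits inside that sector. Because the sector has acute half-angle $\theta:=\arctg\frac{\sqrt3}{\alpha-1}$ and is symmetric about the positive real axis, the extremal disc is the one tangent to both bounding rays, and its radius is the distance from the centre $\alpha$ to either ray, namely $\alpha\sin\theta$. From $\tan\theta=\frac{\sqrt3}{\alpha-1}$ one reads off $\sin\theta=\frac{\sqrt3}{\sqrt{3+(\alpha-1)^2}}$, so the inscribed radius equals $\varphi(\alpha)=\frac{\alpha\sqrt3}{\sqrt{3+(\alpha-1)^2}}$, which is exactly the quantity appearing before the statement. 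I would note here that since $\theta<\pi/2$ and the centre lies at distance $\alpha>\varphi(\alpha)$ from the vertex, the open disc does not reach the origin and its closure meets the boundary only at the two tangency points; therefore every point of the open disc satisfies $|\arg w|<\theta$ strictly.

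The final step is the optimisation: I would maximise $\varphi(\alpha)$ over $\alpha\ge1$. It is convenient to work with $\varphi(\alpha)^2=\frac{3\alpha^2}{\alpha^2-2\alpha+4}$; differentiating the rational factor gives a numerator proportional to $\alpha(4-\alpha)$, so the unique critical point in $(1,\infty)$ is $\alpha=4$, where $\varphi(4)^2=4$, i.e. $\varphi(4)=2$. Thus $\alpha=4$ makes the admissible radius as large as possible, equal to $2$. Assembling the pieces, under the hypothesis $|zf''(z)/f'(z)|<2$ the point $4+zf''(z)/f'(z)$ lies in the open disc of radius $2$ about $4$, hence inside the sector $|\arg w|<\arctg\frac{\sqrt3}{3}=\frac{\pi}{6}$, and Theorem \ref{24-th 1} with $\alpha=4$ gives $f\in\mathcal{S}^{\star}$.

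I do not anticipate a serious obstacle, since everything reduces to elementary plane geometry together with a one-variable optimisation. The only point demanding a little care is the containment claim for the disc: one must verify that the tangent disc genuinely lies inside the sector rather than spilling back past the vertex, which holds precisely because the half-angle is acute and the centre is farther from the origin than the radius.
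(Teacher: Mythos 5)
Your proposal is correct and follows essentially the same route as the paper: apply Theorem \ref{24-th 1} after inscribing the disc of radius $\varphi(\alpha)=\frac{\alpha\sqrt{3}}{\sqrt{3+(\alpha-1)^{2}}}$ centred at $\alpha$ in the sector $|\arg w|<\arctg\frac{\sqrt{3}}{\alpha-1}$, then maximise $\varphi$ at $\alpha=4$ to get radius $2$. Your only addition is to spell out the geometric containment and the one-variable optimisation that the paper leaves as ``easy to verify,'' and both computations check out.
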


\medskip

In a similar way as in Theorem \ref{24-th 1}, we can consider  starlikeness problem in connection with the class defined by
$$\mathcal{G}=\left[f\in\mathcal{A}: \real \left[1+\frac{zf''(z)}{f'(z)}\right]<\frac{3}{2},\,z\in\D\right].$$
Ozaki in \cite{Oz1941} proved that if $f\in \mathcal{G}$, then $f$ is univalent in $\D$. Later, Umezava in
\cite{Um1952} showed the functions from $\mathcal{G}$ are convex in one direction. Also, it is shown in
the papers \cite{JoOb95} and \cite{RS1982} that $\mathcal{G}$ is subclass of $\mathcal{S}^{\star}.$

\medskip

If $\real \left[1+\frac{zf''(z)}{f'(z)}\right]<\frac{3}{2}$ is not satisfied for every $z\in \D$, then we can pose a question if
for some $\beta<1$, such that
\begin{equation}\label{eq5}
\real \left[\beta+\frac{zf''(z)}{f'(z)}\right]<\frac{3}{2}\quad (z\in \D),
\end{equation}
is it possible obtain sufficient condition for starlikeness, in a similar way as in Theorem \ref{24-th 1}.
The condition \eqref{eq5} is equivalent to
\[
\real \left[\frac{3-2\beta}{2}-\frac{zf''(z)}{f'(z)}\right]>0\quad (z\in\D),
\]
and we can use similar technique. This lead so

\begin{thm}\label{24-th 2}
Let $f\in\mathcal{A}$, $\beta<1$, and let
\[
\left|\arg\left[\frac{3-2\beta}{2}-\frac{zf''(z)}{f'(z)}\right]\right|<\arctg\frac{2\sqrt{3}}{5-2\beta} \quad (z\in\D).
\]
Then $f\in\mathcal{S}^{\star}$.
\end{thm}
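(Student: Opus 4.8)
The plan is to run the same argument as in the proof of Theorem \ref{24-th 1}, now applied to the quantity $\frac{3-2\beta}{2}-\frac{zf''(z)}{f'(z)}$ in place of $\alpha+\frac{zf''(z)}{f'(z)}$. First I would set $p(z)=\frac{zf'(z)}{f(z)}$, so that $p(0)=1$ and $\frac{zp'(z)}{p(z)}+p(z)-1=\frac{zf''(z)}{f'(z)}$. Substituting this identity turns the hypothesis into a statement about
\[
\frac{3-2\beta}{2}-\frac{zf''(z)}{f'(z)}=\frac{5-2\beta}{2}-\frac{zp'(z)}{p(z)}-p(z)=:\Psi(z),
\]
so the task reduces to proving the implication: $\left|\arg\Psi(z)\right|<\arctg\frac{2\sqrt{3}}{5-2\beta}$ on $\D$ forces $\real p(z)>0$ on $\D$, i.e. $f\in\mathcal{S}^\star$.

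The second step is to verify $p(z)\neq0$ on $\D$, exactly as before. If $p$ had a zero of order $m$ at some $z_1\in\D$, then writing $p(z)=(z-z_1)^m p_1(z)$ with $p_1(z_1)\neq0$ gives $\frac{zp'(z)}{p(z)}=\frac{mz}{z-z_1}+\frac{zp_1'(z)}{p_1(z)}$, and by choosing the direction of approach $z\to z_1$ one can force $\real\frac{zp'(z)}{p(z)}\to+\infty$, whence $\real\Psi(z)\to-\infty$ and $\left|\arg\Psi(z)\right|\to\pi$, contradicting the hypothesis. Note the sign change relative to Theorem \ref{24-th 1}: there $\frac{zp'}{p}$ enters with a plus sign and one drives its real part to $-\infty$, here it enters with a minus sign and one drives its real part to $+\infty$, but the contradiction is the same.

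Then I would argue by contradiction: suppose there is a first boundary point $z_0\in\D$ with $\real p(z)>0$ for $|z|<|z_0|$ and $p(z_0)=ia$, $a$ real, $a\neq0$. Lemma A supplies $\frac{z_0p'(z_0)}{p(z_0)}=ik$ with $k\geq\frac12\left(a+\frac1a\right)$ when $a>0$ and $k\leq-\frac12\left(|a|+\frac1{|a|}\right)$ when $a<0$. Since $ik$ and $ia$ are purely imaginary,
\[
\real\Psi(z_0)=\frac{5-2\beta}{2},\qquad \imag\Psi(z_0)=-(k+a),
\]
and crucially $\real\Psi(z_0)=\frac{5-2\beta}{2}>0$ precisely because $\beta<1$; this positivity is exactly what makes the argument estimate meaningful.

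Finally, for $a>0$ the bound on $k$ gives $\imag\Psi(z_0)=-(k+a)\leq-\frac{3a+\frac1a}{2}$, so that
\[
\arg\Psi(z_0)\leq-\arctg\frac{3a+\frac1a}{5-2\beta}\leq-\arctg\frac{2\sqrt3}{5-2\beta},
\]
using that $\varphi(a)=3a+\frac1a$ attains its minimum $2\sqrt3$ at $a=\frac1{\sqrt3}$ (the factors of $2$ in numerator and denominator cancelling to produce exactly $\frac{2\sqrt3}{5-2\beta}$); the case $a<0$ is symmetric and yields $\arg\Psi(z_0)\geq\arctg\frac{2\sqrt3}{5-2\beta}$. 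In either case $\left|\arg\Psi(z_0)\right|\geq\arctg\frac{2\sqrt3}{5-2\beta}$, contradicting the hypothesis, so $\real\frac{zf'(z)}{f(z)}>0$ on $\D$ and $f\in\mathcal{S}^\star$. I do not expect a genuine obstacle here, since the structure mirrors Theorem \ref{24-th 1}; the only points requiring care are the two sign reversals (so that $\imag\Psi(z_0)=-(k+a)$, reversing the roles of the $a>0$ and $a<0$ cases) and the verification that $\real\Psi(z_0)>0$, which is where the hypothesis $\beta<1$ is used.
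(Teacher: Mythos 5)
Your proposal is correct and follows essentially the same route as the paper's proof: the same substitution $p(z)=\frac{zf'(z)}{f(z)}$, the same function $\Psi(z)=\frac{5-2\beta}{2}-\frac{zp'(z)}{p(z)}-p(z)$, Lemma A at the boundary point $z_0$ with $\real\Psi(z_0)=\frac{5-2\beta}{2}$ and $\imag\Psi(z_0)=-(k+a)$, and the same minimization of $3a+\frac{1}{a}$ at $a=\frac{1}{\sqrt{3}}$. If anything, you make explicit two details the paper leaves implicit behind the phrase ``as in Theorem \ref{24-th 1}'': that the sign reversal means the zero-of-$p$ argument must drive $\real\frac{zp'(z)}{p(z)}$ to $+\infty$ rather than $-\infty$, and that $\beta<1$ is exactly what guarantees $\real\Psi(z_0)>0$ so the argument estimate makes sense.
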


\begin{proof}
As in the proof of Theorem \ref{24-th 1}, we put that $p(z)=\frac{zf'(z)}{f(z)}$, such that $p(0)=1$ and
$\frac{zp'(z)}{p(z)}+p(z)-1=\frac{zf''(z)}{f'(z)}$. Now, our aim is to prove that for some $\beta<1$
the following implication holds:
\begin{equation}\label{eq8}
\begin{split}
\left|\arg\left[\frac{5-2\beta}{2}-\frac{zp'(z)}{p(z)}-p(z)\right]\right| &<\arctg\frac{2\sqrt{3}}{5-2\beta} \quad (z\in\D)\\[2mm]
\Rightarrow \real p(z)&>0 \quad (z\in\D).
\end{split}
\end{equation}

First, as in Theorem \ref{24-th 1} we conclude that $p(z)\neq0, z\in\D$.

Further, if the implication \eqref{eq8} is not true, there exists a point
$z_{0}\in\D$ such that
 $\real p(z)>0$ for $|z|<|z_{0}|$ and $\real p(z_{0})=0$, where $p(z_{0})\neq0$.
If we put $p(z_{0})=ia$, $a$ is real and $a\neq0$, then by Lemma A
we have $$\frac{z_{0}p'(z_{0})}{p(z_{0})}=ik,$$
where $k\geq\frac{1}{2}\left(a+\frac{1}{a}\right)$, when $a>0$, and
$k \leq-\frac{1}{2}\left(|a|+\frac{1}{|a|}\right)$, when $a<0$.
Also, for
$$\Psi(z)=\frac{5-2\beta}{2}-\frac{zp'(z)}{p(z)}-p(z),$$
using the previous conclusions, we have
$$\real \Psi(z_{0})=\frac{5-2\beta}{2} , \quad \imag\Psi(z_{0})=-(k+a).$$

Using the same method as in the proof of Theorem \ref{24-th 1}, we easily conclude that
$$\arg\Psi(z_{0})=-\arctg\frac{2(k+a)}{5-2\beta}\leq -\arctg\frac{2\sqrt{3}}{5-2\beta},$$
when $a>0,$ and
$$\arg\Psi(z_{0})=-\arctg\frac{2(k+a)}{5-2\beta}\geq \arctg\frac{2\sqrt{3}}{5-2\beta},$$
when $a<0$. These facts imply contradiction of the assumption in \eqref{eq8}.

So, we have the statement of this theorem.
\end{proof}

\medskip

Letting $\beta$ tend to 1 in Theorem \ref{24-th 2}, we receive

\medskip

\begin{cor}
Let $f\in\A$ and
$$\left|\arg\left[\frac{1}{2}-\frac{zf''(z)}{f'(z)}\right]\right|<\arctg\frac{2}{\sqrt{3}}\approx49.1^\circ\quad (z\in\D).$$
Then $f\in\mathcal{S}^{\star}.$
\end{cor}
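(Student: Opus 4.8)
The plan is to recognize that this corollary is precisely the boundary instance $\beta=1$ of Theorem \ref{24-th 2}. Substituting $\beta=1$ gives $\frac{3-2\beta}{2}=\frac12$ and $\arctg\frac{2\sqrt{3}}{5-2\beta}=\arctg\frac{2\sqrt{3}}{3}=\arctg\frac{2}{\sqrt{3}}$, so the hypothesis of Theorem \ref{24-th 2} collapses exactly onto the one stated here. The single point requiring care is that Theorem \ref{24-th 2} is formulated for $\beta<1$, whereas we now sit on the boundary value $\beta=1$; this is the only genuine obstacle, and I see two natural ways around it.

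The shorter route is to observe that the proof of Theorem \ref{24-th 2} never truly uses $\beta<1$: that restriction was imposed only to present \eqref{eq5} as a relaxation of the bound $\real\left[1+\frac{zf''(z)}{f'(z)}\right]<\frac32$. What the Lemma~A argument actually needs is that $\real\Psi(z_0)=\frac{5-2\beta}{2}$ be positive, so that $\Psi(z_0)$ lies in the right half-plane and the monotonicity of $\arctg$ applies. At $\beta=1$ one has $\real\Psi(z_0)=\frac32>0$, so I would simply rerun that proof verbatim: put $p(z)=\frac{zf'(z)}{f(z)}$, note the identity $\frac12-\frac{zf''(z)}{f'(z)}=\frac32-\frac{zp'(z)}{p(z)}-p(z)=:\Psi(z)$, rule out zeros of $p$ exactly as before, and if starlikeness failed invoke Lemma~A at a first boundary point $z_0$ with $p(z_0)=ia$. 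Then $\real\Psi(z_0)=\frac32$ and $\imag\Psi(z_0)=-(k+a)$, and since $|k+a|\ge\frac{3|a|+1/|a|}{2}\ge\sqrt3$ (the minimum of $3t+\frac1t$ being $2\sqrt3$), one gets $|\arg\Psi(z_0)|\ge\arctg\frac{\sqrt3}{3/2}=\arctg\frac{2}{\sqrt3}$, contradicting the hypothesis.

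Alternatively, if one prefers to invoke Theorem \ref{24-th 2} as a black box rather than reopen its proof, I would argue by dilation. For $0<r<1$ set $f_r(z)=f(rz)/r\in\A$; then $\frac{zf_r''(z)}{f_r'(z)}=\frac{rzf''(rz)}{f'(rz)}$, so $\frac12-\frac{zf_r''(z)}{f_r'(z)}$ takes values in the compact set $\left\{\frac12-\frac{\zeta f''(\zeta)}{f'(\zeta)}:|\zeta|\le r\right\}$, on which $|\arg(\cdot)|$ attains a maximum $M_r<\arctg\frac{2}{\sqrt3}$ by continuity and the strict hypothesis. Here the dilation is essential: the raw hypothesis on $f$ is only a pointwise strict inequality that may degenerate as $|z|\to1$, whereas $f_r$ has a value set bounded away from the boundary rays. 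Now the sector of Theorem \ref{24-th 2} has vertex $\frac{3-2\beta}{2}\to\frac12^+$ and half-angle $\arctg\frac{2\sqrt3}{5-2\beta}\to\arctg\frac{2}{\sqrt3}^-$ as $\beta\to1^-$; since the vertex shift $\frac{3-2\beta}{2}-\frac12\to0$, the quantity $\arg\left[\frac{3-2\beta}{2}-w\right]$ converges uniformly to $\arg\left[\frac12-w\right]$ on the compact value set (which stays away from the vertex), so it remains below $M_r$, while the half-angle tends to $\arctg\frac{2}{\sqrt3}>M_r$. Hence for $\beta$ close enough to $1$ the sector contains the value set of $f_r$, so $f_r\in\mathcal{S}^{\star}$; letting $r\to1^-$ and using that $\mathcal{S}^{\star}$ is closed under locally uniform convergence of normalized functions yields $f\in\mathcal{S}^{\star}$.

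I expect the main obstacle to be nothing more than this boundary bookkeeping at $\beta=1$. The first route dispatches it in a single line (checking $\real\Psi(z_0)=\frac32>0$), which is why I would present it as the primary argument; the second trades that line for the uniform comparison of sectors needed to push the dilated functions $f_r$ into the admissible range $\beta<1$, and is the one matching the paper's phrase ``letting $\beta$ tend to $1$''.
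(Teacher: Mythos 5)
Your proposal is correct, and it is in fact more careful than the paper, whose entire derivation of this corollary is the phrase ``Letting $\beta$ tend to $1$ in Theorem \ref{24-th 2}, we receive,'' with no further justification. Your observation that this is not a literal substitution is a genuine point: for $\beta<1$ the admissible region for $w=\frac{zf''(z)}{f'(z)}$ is a sector with vertex $\frac{3-2\beta}{2}>\frac12$ and half-angle $\arctg\frac{2\sqrt{3}}{5-2\beta}<\arctg\frac{2}{\sqrt{3}}$, and since the half-angle is strictly smaller than the limiting one, no single $\beta<1$ gives a hypothesis region containing the limiting one --- far from the vertex the angular deficit dominates the vertex shift, so the corollary's hypothesis does not imply the theorem's for any fixed admissible $\beta$. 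Your first route (rerunning the proof of Theorem \ref{24-th 2} verbatim at $\beta=1$, where all that is needed is $\real\Psi(z_0)=\frac32>0$ together with the minimum value $2\sqrt{3}$ of $t\mapsto 3t+\frac1t$, giving $|\arg\Psi(z_0)|\geq\arctg\frac{2}{\sqrt{3}}$ and a contradiction) is exactly the paper's own method for the theorem and settles the corollary directly; your second route (dilations $f_r$, compactness of the value set, its positive distance from the vertex, uniform convergence of $\arg\left[\frac{3-2\beta}{2}-w\right]$ to $\arg\left[\frac12-w\right]$, and closure of $\mathcal{S}^{\star}$ under locally uniform limits) is the rigorous version of what the paper's limit statement would have to mean if Theorem \ref{24-th 2} were used as a black box. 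Either argument is sound; the first is preferable for precisely the reason you give, and your diagnosis that the boundary bookkeeping at $\beta=1$ is the only obstacle is accurate.
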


\medskip

As in the case of Theorem \ref{24-th 1}, we can show that the disc with center $\frac{3-2\beta}{2}$ and radius
$\frac{(3-2\beta)\sqrt{3}}{\sqrt{12+(5-2\beta)^{2}}}$ is containing in the angle
$|\arg z|\leq \arctg\frac{2\sqrt{3}}{5-2\beta} $. So, if
$$\left|\frac{zf''(z)}{f'(z)}\right|=
\left|\left(\frac{3-2\beta}{2}-\frac{zf''(z)}{f'(z)}\right)-\frac{3-2\beta}{2}\right|<
\frac{(3-2\beta)\sqrt{3}}{\sqrt{12+(5-2\beta)^{2}}},$$
for all $z\in\D$, then Theorem \ref{24-th 2} brings that $f\in\mathcal{S}^{\star}.$
Since the function $\psi(\beta)=:\frac{(3-2\beta)\sqrt{3}}{\sqrt{12+(5-2\beta)^{2}}}$ is decreasing
on the interval $(-\infty,1]$, and $ \psi(\beta)\rightarrow \sqrt{3}$, when $\beta \rightarrow -\infty,$ we get

\medskip

\begin{cor}
Let $f\in\A$ and
$$\left|\frac{zf''(z)}{f'(z)}\right|<\sqrt{3}\quad (z\in\D).$$
Then $f\in\mathcal{S}^{\star}.$
\end{cor}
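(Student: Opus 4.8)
The plan is to deduce this corollary from Theorem~\ref{24-th 2} by the same disc-in-cone reasoning already sketched in the text just before the statement. Write $g(z)=\frac{zf''(z)}{f'(z)}$, so that $g$ is analytic on $\D$ with $g(0)=0$. For a fixed $\beta<1$ put $c=\frac{3-2\beta}{2}>0$ and $\theta=\arctg\frac{2\sqrt3}{5-2\beta}\in(0,\pi/2)$. The first step is the elementary planar fact that the open disc centred at $c$ on the positive real axis, of radius $r$, is contained in the sector $\{w:|\arg w|<\theta\}$ precisely when $r\le c\sin\theta$, the extremal value of $r$ being the distance from $c$ to either bounding ray. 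Since $\tan\theta=\frac{2\sqrt3}{5-2\beta}$ gives $\sin\theta=\frac{2\sqrt3}{\sqrt{12+(5-2\beta)^2}}$, this tangency radius equals $c\sin\theta=\frac{(3-2\beta)\sqrt3}{\sqrt{12+(5-2\beta)^2}}=\psi(\beta)$, matching the radius named in the text.

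Next I would apply this with $w=\frac{3-2\beta}{2}-g(z)=c-g(z)$, which differs from the centre $c$ by $-g(z)$. Hence the hypothesis $|g(z)|<\psi(\beta)$ on $\D$ forces $w$ into the disc of radius $\psi(\beta)$ centred at $c$, and therefore into the sector $|\arg w|<\theta$; this is exactly the angular assumption of Theorem~\ref{24-th 2}, which then yields $f\in\mathcal{S}^\star$. Thus for every $\beta<1$ the bound $|g(z)|<\psi(\beta)$ is sufficient for starlikeness. A routine differentiation shows $\psi$ is decreasing on $(-\infty,1]$ with $\psi(\beta)\to\sqrt3$ as $\beta\to-\infty$, so $\sup_{\beta<1}\psi(\beta)=\sqrt3$, and letting $\beta$ run to $-\infty$ relaxes the sufficient bound toward $|g(z)|<\sqrt3$.

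The delicate point, and the one I expect to be the main obstacle, is that the supremum $\sqrt3$ is \emph{not} attained by any finite $\beta$: for each $\beta<1$ one has $\psi(\beta)<\sqrt3$ strictly, so the pointwise hypothesis $|g(z)|<\sqrt3$ is not literally the disc condition for any single value of $\beta$, and a genuine limiting argument is required. I would bridge this gap by dilation. Fix $\rho\in(0,1)$ and set $f_\rho(z)=f(\rho z)/\rho\in\A$; a direct computation gives $\frac{zf_\rho''(z)}{f_\rho'(z)}=g(\rho z)$, whose modulus is bounded by $\max_{|z|\le\rho}|g(z)|$, a number strictly less than $\sqrt3$ by compactness of the closed subdisc together with the pointwise hypothesis. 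Choosing $\beta<1$ with $\psi(\beta)$ exceeding this maximum, the second paragraph applies to $f_\rho$ and gives $\real\frac{zf_\rho'(z)}{f_\rho(z)}>0$ on $\D$. Since $\frac{zf_\rho'(z)}{f_\rho(z)}=\frac{\zeta f'(\zeta)}{f(\zeta)}$ with $\zeta=\rho z$, this says $\real\frac{\zeta f'(\zeta)}{f(\zeta)}>0$ for all $|\zeta|<\rho$; letting $\rho\to1$ then yields $\real\frac{zf'(z)}{f(z)}>0$ throughout $\D$, i.e.\ $f\in\mathcal{S}^\star$. Everything outside this passage to the limit is just the geometry of the cone and the monotonicity of $\psi$.
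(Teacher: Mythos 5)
Your proof is correct, and its core --- the planar computation showing that the disc of radius $\psi(\beta)=\frac{(3-2\beta)\sqrt{3}}{\sqrt{12+(5-2\beta)^{2}}}$ centred at $\frac{3-2\beta}{2}$ lies in the angle $|\arg w|<\arctg\frac{2\sqrt{3}}{5-2\beta}$, followed by an appeal to Theorem~\ref{24-th 2} and the monotonicity of $\psi$ on $(-\infty,1]$ --- is exactly the paper's route. Where you go beyond the paper is the final limiting step, and you are right that something is needed there: since $\psi(\beta)<\sqrt{3}$ strictly for every $\beta<1$, the union over $\beta$ of the disc conditions covers precisely the functions with $\sup_{z\in\D}\left|\frac{zf''(z)}{f'(z)}\right|<\sqrt{3}$, whereas the corollary assumes only the pointwise bound $\left|\frac{zf''(z)}{f'(z)}\right|<\sqrt{3}$; the paper bridges this distinction with the single phrase ``we get.'' Your dilation argument --- passing to $f_\rho(z)=f(\rho z)/\rho$, computing $\frac{zf_\rho''(z)}{f_\rho'(z)}=g(\rho z)$ with $g(z)=\frac{zf''(z)}{f'(z)}$, bounding its modulus by $\max_{|z|\le\rho}|g(z)|<\sqrt{3}$ via compactness, choosing $\beta$ with $\psi(\beta)$ above that maximum, and then exhausting $\D$ by letting $\rho\to1$ (each fixed $z$ is handled outright by any $\rho>|z|$, so strictness of $\real\frac{zf'(z)}{f(z)}>0$ is never at risk) --- closes this gap cleanly and is the standard device for doing so; the same remark would apply to the paper's last corollary, obtained by letting $\gamma\to+\infty$ in Theorem~\ref{24-th 3}, while in the corollary with bound $\left|\frac{zf''(z)}{f'(z)}\right|<2$ the optimum $\alpha=4$ is attained and no limit is required. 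In short: same approach as the paper, executed more rigorously at the one point where the paper is terse.
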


\medskip

For a starlike function $f$ it is not necessary that $Re\frac{f(z)}{z}>0,$ $z\in\D$. For example,
for Koebe function $k(z)=\frac{z}{(1-z)^{2}}$ we have
$$\left.\real \frac{k(z)}{z}\right|_{z=(1+i)/\sqrt{2}} = -(\sqrt{2}+1)<0,$$
and this also holds for points in $\D$ close enought to $\frac{1+i}{\sqrt{2}}$.

\medskip

In the next theorem we give  a condition which provides  $\real \frac{f(z)}{z}>0$, $z\in\D$.

\begin{thm}\label{24-th 3}
Let $f\in \mathcal{A}$,\,$\gamma\geq0$,\, and let
\[
\left|\arg\left[\frac{zf'(z)}{f(z)}+\gamma \right]\right|<\arctg\frac{1}{1+\gamma}\quad (z\in\D).
\]
Then $\real \frac{f(z)}{z}>0$, $z\in\D.$
\end{thm}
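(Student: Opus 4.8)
The plan is to mirror the proof of Theorem \ref{24-th 1}, but to choose the substitution suited to the new conclusion $\real \frac{f(z)}{z}>0$. Accordingly I would set $p(z)=\frac{f(z)}{z}$, so that $p$ is analytic in $\D$ with $p(0)=1$. From $f(z)=zp(z)$ one gets $f'(z)=p(z)+zp'(z)$, and hence
\[
\frac{zf'(z)}{f(z)} = 1 + \frac{zp'(z)}{p(z)}.
\]
Thus the hypothesis becomes a bound on the argument of $\Phi(z):=(1+\gamma)+\frac{zp'(z)}{p(z)}$, and the goal is to prove the implication
\[
\left|\arg \Phi(z)\right| < \arctg \frac{1}{1+\gamma} \quad (z\in\D) \quad \Rightarrow \quad \real\, p(z) > 0 \quad (z\in\D).
\]

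First I would dispose of the possibility that $p$ vanishes in $\D$, by the same device used in Theorem \ref{24-th 1}: a zero of order $m$ at some $z_1$ forces $\real \frac{zp'(z)}{p(z)}\to -\infty$, hence $\real\Phi(z)\to-\infty$, near $z_1$. Since $\gamma\ge0$ gives $\arctg\frac{1}{1+\gamma}\le\frac{\pi}{4}$, the argument of $\Phi$ is confined strictly inside $(-\pi/2,\pi/2)$, which forces $\real\Phi(z)>0$ throughout $\D$ and so rules this out. With $p$ nonvanishing I would argue by contradiction: if the target inequality $\real\,p>0$ failed, there would be a boundary point $z_0\in\D$ with $p(z_0)=ia$ ($a$ real, $a\neq0$), and Lemma A would supply $\frac{z_0p'(z_0)}{p(z_0)}=ik$ with $k\ge\frac12(a+\frac1a)$ for $a>0$ and $k\le-\frac12(|a|+\frac1{|a|})$ for $a<0$.

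The decisive computation is then that $\real\Phi(z_0)=1+\gamma$ and $\imag\Phi(z_0)=k$. For $a>0$ this yields $\arg\Phi(z_0)=\arctg\frac{k}{1+\gamma}\ge\arctg\frac{(a+1/a)/2}{1+\gamma}$, and the point worth stressing is that here, unlike in Theorem \ref{24-th 1}, the expression $\Phi$ carries no additive $p(z)$ term, so the quantity to minimize is simply $\varphi(a)=\frac12(a+\frac1a)$, whose minimum over $a>0$ is $\varphi(1)=1$. This gives $\arg\Phi(z_0)\ge\arctg\frac{1}{1+\gamma}$, and symmetrically $\arg\Phi(z_0)\le-\arctg\frac{1}{1+\gamma}$ for $a<0$, so that $\left|\arg\Phi(z_0)\right|\ge\arctg\frac{1}{1+\gamma}$, contradicting the hypothesis. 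I do not anticipate a serious obstacle: the whole argument is a template application of Lemma A, and the only point requiring care is verifying that the elementary minimization produces exactly the constant $1$ appearing in $\arctg\frac{1}{1+\gamma}$, confirming that the stated bound is the natural one for this substitution.
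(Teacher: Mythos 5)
Your proposal is correct and follows the paper's own proof essentially step for step: the same substitution $p(z)=\frac{f(z)}{z}$ with the identity $\frac{zf'(z)}{f(z)}=1+\frac{zp'(z)}{p(z)}$, the same exclusion of zeros of $p$, and the same application of Lemma A, with the minimization $\min_{a>0}\frac{1}{2}\left(a+\frac{1}{a}\right)=1$ yielding exactly the constant in $\arctg\frac{1}{1+\gamma}$. The only difference is cosmetic: you spell out the zero-exclusion argument (via $\real\frac{zp'(z)}{p(z)}\to-\infty$ forcing $\real\Phi$ out of the admissible half-plane), which the paper merely asserts by reference to Theorem \ref{24-th 1}.
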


\begin{proof}
We apply the same method as in the previous theorems. In this case, we use $p(z)=\frac{f(z)}{z}$, such that $p(0)=1$ and
$\frac{zp'(z)}{p(z)}+1=\frac{zf'(z)}{f(z)}$. So, the result we need to prove can be rewritten in the following equivalent form:
\begin{equation}\label{eq10}
\begin{split}
\left|\arg\left[\frac{zp'(z)}{p(z)}+1+\gamma \right]\right|&<\arctg\frac{1}{1+\gamma}\quad (z\in\D)\\[2mm]
\Rightarrow \qquad \real p(z)&>0\quad(z\in\D) .
\end{split}
\end{equation}

First, $p(z)\neq0$ for all $z\in\D$.

Next, if the implication \eqref{eq10} does not hold in the unit disc, then  there exists a point
$z_{0}\in\D$ such that $\real p(z)>0$ for $|z|<|z_{0}|$ and $\real p(z_{0})=0$, where $p(z_{0})\neq0$.
If we put $p(z_{0})=ia$, $a$ is real and $a\neq0$, then by Lemma A
we have $$\frac{z_{0}p'(z_{0})}{p(z_{0})}=ik,$$
where $k\geq\frac{1}{2}\left(a+\frac{1}{a}\right)$, when $a>0$, and
$k \leq-\frac{1}{2}\left(|a|+\frac{1}{|a|}\right)$, when $a<0$. Now, for $a>0$ we get:
\[
\begin{split}
\arg\left[\frac{z_0p'(z_0)}{p(z_0)}+1+\gamma \right]&=\arg(ik+1+\gamma)=\arctg\frac{k}{1+\gamma}\\
&\geq\frac{\frac{1}{2}\left(a+\frac{1}{a}\right)}{1+\gamma}\geq \arctg\frac{1}{1+\gamma},
\end{split}
\]
and for $a<0$,
$$\arg\left[\frac{z_0p'(z_0)}{p(z_0)}+1+\gamma \right]\leq -\arctg\frac{1}{1+\gamma}.$$
By combining the above conclusions, we receive that
$$\left|\arg\left(\frac{z_0p'(z_0)}{p(z_0)}+1+\gamma \right)\right|\geq \arctg\frac{1}{1+\gamma},$$
which is a contradiction to assumption in \eqref{eq10}.
\end{proof}

\medskip

For $\gamma=0$ in previous theorem we have next result.

\begin{cor}
Let $f\in\mathcal{A}$ and
$$\left|\arg\frac{zf'(z)}{f(z)}\right|<\frac{\pi}{4}\quad (z\in\D).$$
Then $\real \frac{f(z)}{z}>0$, $z\in\D.$
\end{cor}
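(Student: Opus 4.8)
The plan is to follow the same Nunokawa-lemma strategy as in Theorems \ref{24-th 1} and \ref{24-th 2}, but with a normalizing substitution adapted to the desired conclusion. Since we now want to control $\real\frac{f(z)}{z}$ rather than $\real\frac{zf'(z)}{f(z)}$, the natural choice is $p(z)=\frac{f(z)}{z}$. Because $f\in\A$ has the expansion $f(z)=z+a_2z^2+\cdots$, this gives $p(0)=1$, and a short computation yields the identity $\frac{zf'(z)}{f(z)}=1+\frac{zp'(z)}{p(z)}$. Hence the hypothesis becomes a bound on $\arg\left[\frac{zp'(z)}{p(z)}+1+\gamma\right]$, and the conclusion $\real\frac{f(z)}{z}>0$ is exactly $\real p(z)>0$. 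This recasts the problem into the same form handled previously.

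First I would establish that $p$ is zero-free in $\D$. The argument is identical to the one in Theorem \ref{24-th 1}: a zero of order $m$ at some $z_1$ would force $\frac{zp'(z)}{p(z)}$ to carry a simple pole whose real part tends to $-\infty$ as $z\to z_1$, incompatible with the finite argument bound in the hypothesis. With $p$ nonvanishing, I would then argue by contradiction. If $\real p>0$ fails somewhere, continuity produces a point $z_0\in\D$ with $\real p(z)>0$ for $|z|<|z_0|$ and $\real p(z_0)=0$, so that $p(z_0)=ia$ with $a$ real and $a\neq0$. Lemma A then supplies $\frac{z_0p'(z_0)}{p(z_0)}=ik$ together with the bound $k\geq\frac{1}{2}\left(a+\frac{1}{a}\right)$ when $a>0$ and the symmetric upper bound when $a<0$.

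The decisive step is the argument computation at $z_0$. Here $\frac{z_0p'(z_0)}{p(z_0)}+1+\gamma=ik+(1+\gamma)$, whose real part is the positive constant $1+\gamma$ and whose imaginary part is $k$, so the argument equals $\arctg\frac{k}{1+\gamma}$. The key estimate is the elementary inequality $\frac{1}{2}\left(a+\frac{1}{a}\right)\geq1$, valid for every $a>0$; this forces $k\geq1$ when $a>0$ and, symmetrically, $k\leq-1$ when $a<0$. Consequently $\left|\arctg\frac{k}{1+\gamma}\right|\geq\arctg\frac{1}{1+\gamma}$, contradicting the assumed strict bound and completing the proof.

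I expect the only genuine subtlety to be the choice of substitution: unlike Theorems \ref{24-th 1} and \ref{24-th 2}, where $p(z)=\frac{zf'(z)}{f(z)}$ is natural, here one must take $p(z)=\frac{f(z)}{z}$ precisely so that the target quantity $f(z)/z$ coincides with $p(z)$. Once this is in place, the weighted minimization of Theorem \ref{24-th 1} (where one minimized $3a+\frac{1}{a}$) collapses to the classical bound $a+\frac{1}{a}\geq2$, and the rest of the argument is routine.
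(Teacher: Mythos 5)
Your proof is correct and is essentially the paper's own argument: the paper obtains this corollary simply as the case $\gamma=0$ of Theorem \ref{24-th 3}, whose proof is exactly the Nunokawa-lemma scheme you describe, with the substitution $p(z)=\frac{f(z)}{z}$, the zero-free step, and the contradiction at $z_0$ via $k\geq\frac{1}{2}\left(a+\frac{1}{a}\right)\geq 1$. Your observation that the minimization collapses to $a+\frac{1}{a}\geq 2$ matches the paper's estimate $\arctg\frac{k}{1+\gamma}\geq\arctg\frac{1}{1+\gamma}$ specialized to $\gamma=0$, giving the threshold $\frac{\pi}{4}$.
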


\medskip

In Theorem \ref{24-th 3} the function $f$ from $\mathcal{A}$ need not be starlike as the next example shows.

\begin{ex}
Let  $f\in \mathcal{A}$ is defined by
\begin{equation}\label{eq4-2}
\frac{zf'(z)}{f(z)}=\sqrt{3}\left(\frac{1+z}{1-z}\right)^{\frac{1}{3}}+1-\sqrt{3},
\end{equation}
where we use the principal value for square root. For real $z$ close to -1 we have that
$\real \left[\frac{zf'(z)}{f(z)}\right]<0,$ which means that $f$ is not starlike. But
from \eqref{eq4-2} we have
$$\frac{zf'(z)}{f(z)}+(\sqrt{3}-1)=\sqrt{3}\left(\frac{1+z}{1-z}\right)^{\frac{1}{3}},$$
and from here
\[
\begin{split}
\left|\arg \left[\frac{zf'(z)}{f(z)}+(\sqrt{3}-1)\right]\right|
&\leq\frac{1}{3}\left|\arg\left(\frac{1+z}{1-z}\right)\right|\\
&<\frac{\pi}{6}=\arctg\frac{1}{(\sqrt{3}-1)+1},
\end{split}
\]
which by Theorem \ref{24-th 3} (with $\gamma= \sqrt{3}-1$)  implies that $\real\frac{f(z)}{z}>0,$ $z\in\D$.
\end{ex}

\medskip

It is possible to show that  the disc with center $1+\gamma$ and radius
$\frac{1+\gamma}{\sqrt{1+(1+\gamma)^{2}}}$ lies in the angle
$|\arg z|\leq \arctg\frac{1}{1+\gamma}$, and so, by using the result of  Theorem \ref{24-th 3} we have that
$$\left|\frac{zf'(z)}{f(z)}-1\right|=\left|\left(\frac{zf'(z)}{f(z)}+\gamma \right)-(1+\gamma)\right|<\frac{1+\gamma}{\sqrt{1+(1+\gamma)^{2}}}\quad (z\in\D)$$
implies $\real \frac{f(z)}{z}>0,$ $z\in\D.$
When $\gamma\rightarrow+\infty$ we receive

\medskip

\begin{cor}
Let $f\in\A$ and
$$\left|\frac{zf'(z)}{f(z)}-1\right|<1 \quad (z\in\D).$$
Then $\real \frac{f(z)}{z}>0,$ $z\in\D.$
\end{cor}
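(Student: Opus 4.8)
The plan is to deduce this corollary from Theorem~\ref{24-th 3} by a purely geometric reduction followed by a limiting argument. For each fixed $\gamma\ge 0$ I would first record the elementary fact, announced just before the statement, that the open disc $\{w:|w-(1+\gamma)|<r(\gamma)\}$ with $r(\gamma)=\frac{1+\gamma}{\sqrt{1+(1+\gamma)^{2}}}$ is contained in the angular sector $\{w:|\arg w|<\arctg\frac{1}{1+\gamma}\}$. This holds because the centre $1+\gamma$ lies on the positive real axis, and the Euclidean distance from it to either bounding ray of the sector equals $(1+\gamma)\sin\theta$ with $\theta=\arctg\frac{1}{1+\gamma}$; since $\sin\theta=\frac{1}{\sqrt{1+(1+\gamma)^{2}}}$, this distance is exactly $r(\gamma)$, so the disc is inscribed in the sector and hence contained in it.

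Next I would translate this containment into the setting of $f$. Writing $w=\frac{zf'(z)}{f(z)}+\gamma$, the inequality $|w-(1+\gamma)|<r(\gamma)$ becomes $\left|\frac{zf'(z)}{f(z)}-1\right|<r(\gamma)$, while membership of $w$ in the sector is precisely the hypothesis $\left|\arg\left[\frac{zf'(z)}{f(z)}+\gamma\right]\right|<\arctg\frac{1}{1+\gamma}$ of Theorem~\ref{24-th 3}. Thus, for every finite $\gamma\ge 0$, the condition $\left|\frac{zf'(z)}{f(z)}-1\right|<r(\gamma)$ for all $z\in\D$ already forces $\real\frac{f(z)}{z}>0$ on $\D$.

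Finally I would let $\gamma\to+\infty$. Since $r(\gamma)^{2}=1-\frac{1}{1+(1+\gamma)^{2}}$, the radius $r(\gamma)$ increases to $1$, which is what produces the clean threshold $1$ in the statement. The delicate point---the one I expect to be the main obstacle---is that the hypothesis $\left|\frac{zf'(z)}{f(z)}-1\right|<1$ is only a strict \emph{pointwise} bound on the open disc, so the supremum over $\D$ could equal $1$ and no single finite $\gamma$ with $r(\gamma)$ exceeding it need exist. To make the limit rigorous I would pass to the dilations $f_{\rho}(z)=\frac{1}{\rho}f(\rho z)$ for $0<\rho<1$: these lie in $\A$, satisfy $\frac{zf_{\rho}'(z)}{f_{\rho}(z)}=\frac{\rho z f'(\rho z)}{f(\rho z)}$, and by compactness of $\{|w|\le\rho\}$ the quantity $\sup_{|z|<1}\left|\frac{zf_{\rho}'(z)}{f_{\rho}(z)}-1\right|=\sup_{|w|\le\rho}\left|\frac{wf'(w)}{f(w)}-1\right|$ is strictly less than $1$. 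Hence some finite $\gamma$ applies to $f_{\rho}$, giving $\real\frac{f(\rho z)}{\rho z}>0$ for all $z\in\D$; letting $\rho\to 1$ (equivalently, for a fixed $w\in\D$ choosing $\rho\in(|w|,1)$ and $z=w/\rho$) then yields $\real\frac{f(w)}{w}>0$ for every $w\in\D$, completing the proof. Everything except this dilation step is routine: the geometry is a one-line distance computation, and the implication for each finite $\gamma$ is a direct citation of Theorem~\ref{24-th 3}.
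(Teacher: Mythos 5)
Your proposal is correct and follows the paper's own route in outline: the disc-in-sector computation, the resulting finite-$\gamma$ criterion $\left|\frac{zf'(z)}{f(z)}-1\right|<\frac{1+\gamma}{\sqrt{1+(1+\gamma)^{2}}}$ via Theorem \ref{24-th 3}, and then the passage $\gamma\to+\infty$. Where you genuinely part company with the paper is the last step. The paper simply writes ``when $\gamma\rightarrow+\infty$ we receive'' the corollary and stops; this is not literally a proof, because the radius $r(\gamma)=\frac{1+\gamma}{\sqrt{1+(1+\gamma)^{2}}}$ is strictly less than $1$ for every finite $\gamma$, so a function with $\sup_{z\in\D}\left|\frac{zf'(z)}{f(z)}-1\right|=1$ satisfies the corollary's hypothesis while satisfying no finite-$\gamma$ hypothesis. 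You identify exactly this obstacle and repair it with the dilation argument: $f_{\rho}(z)=f(\rho z)/\rho$ lies in $\A$, the bound on the compact set $\{|w|\le\rho\}$ is strictly below $1$, some finite $\gamma$ then applies to $f_{\rho}$, and exhausting $\D$ over $\rho<1$ gives strict positivity of $\real\frac{f(w)}{w}$ at every point. This is a rigorous completion of an argument the paper leaves informal; the only small point worth adding is that your compactness step uses continuity of $\frac{wf'(w)}{f(w)}$ on $\{|w|\le\rho\}$, which holds because the hypothesis forces $f(z)\neq0$ for $0<|z|<1$ (a zero of $f$ would produce a pole of $\frac{zf'(z)}{f(z)}$, violating the bound), so that this function is analytic on all of $\D$ with value $1$ at the origin. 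In short: same skeleton as the paper, but your version buys actual rigor at the limiting step, at the modest cost of the extra dilation machinery.
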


\medskip

\end{document}